\documentclass{article}


\usepackage{amsmath}
\usepackage{amsthm}
\usepackage{amsfonts}
\usepackage{hyperref}
\usepackage{amssymb}
\newtheorem{theorem}{Theorem}
\newtheorem{lemma}{Lemma}
\newtheorem{corollary}{Corollary}
\newtheorem*{acknowledgment}{Acknowledgment}
\theoremstyle{definition}

\usepackage{authblk}

\begin{document}

\title{Polynomials Consisting of Quadratic Factors with Roots Modulo Any Positive Integer}
\author{Bhawesh Mishra}
\affil[]{Department of Mathematics\\ The Ohio State University, Columbus}

\maketitle

\begin{abstract}
We give an infinite family of polynomials that have roots modulo every positive integer but fail to have rational roots. Each polynomial in this family is made up of monic quadratic factors that do not have linear term. The proofs in this note are accessible to anyone with basic knowledge of undergraduate elementary number theory. 
\end{abstract}

\section{Introduction.} 

We concern ourselves with polynomials $f$ with integer coefficients such that $f(x)\equiv 0 \hspace{1mm}(\text{mod}\hspace{1mm} m)$ is solvable for every positive integer $m$. If a polynomial $f$ has an integer root then $f$ clearly has roots modulo every positive integer. However, there exist polynomials that have roots modulo every positive integer but do not have any rational root. Such polynomials provide counterexamples to the local-global principle in number theory (see \cite[pp. 99 -- 108]{Gou} for more details on the local-global principle).  

Given a cube-free integer $n \neq 1$, Hyde, Lee, and Spearman proved in \cite{HLS} that 

\begin{equation*}
    g(x) = (x^{3} - n) (x^{2} + 3)
\end{equation*}
has no rational roots but has roots modulo every positive integer if and only if $n \equiv 1 \hspace{1mm}(\text{mod}\hspace{1mm} 9)$ and all prime factors of $n$ are equivalent to $1$ modulo $3$.

It is well known that if $p, q$ are distinct odd primes such that $p \equiv q \equiv 1($mod $4)$ and $p$ is a square modulo $q$, then the polynomial 
\begin{equation*}
    h(x) = (x^{2} - p) (x^{2} - q) (x^{2} - pq)
\end{equation*}
has no rational roots but has roots modulo every positive integer (see \cite[pp 139--140]{Jones}). Let $c$ and $d$ be square-free integers not equal to $1$, let $c_{1} = \frac{c}{\text{gcd}(c,d)}$ and $d_{1} = \frac{d}{\text{gcd}(c,d)}$. Hyde and Spearman obtained necessary and sufficient conditions for the polynomial \begin{equation*}
    p(x) = (x^{2} - c) (x^{2} - d) (x^{2} - c_{1}d_{1})
\end{equation*}
to have a root modulo every integer but have no rational root \cite{HS}. We obtain necessary and sufficient conditions for polynomials, made up of any number of similar quadratic factors, to have roots modulo every integer but have no rational root.

Note that the equation $f(x) \equiv 0 \hspace{1mm}(\text{mod}\hspace{1mm} m)$
is solvable for every positive integer $m$ if and only if the equation $f(x) \equiv 0 \hspace{1mm}(\text{mod}\hspace{1mm} p^{b})$ is solvable for each prime number $p$ and each positive integer $b$. This is a consequence of the Chinese remainder theorem. 

Given a prime $p$ and an integer $n$, we denote the Legendre symbol of $n$ with respect to $p$ by $\big(\frac{n}{p}\big)$. When $p \nmid n$, $\big(\frac{n}{p}\big) = +1 $ if $n$ is a square modulo $p$ and $\big(\frac{n}{p}\big) = -1$ otherwise. When $p \mid n$, $\big(\frac{n}{p}\big) = 0$.  Given a prime $p$ and a non-zero integer $l$,  $p^{a} \mid\mid l$ will denote that $p^{a} (a \geq 1)$ is the highest power of $p$ dividing $l$. Our main result is the following theorem.

\begin{theorem}
Let $n \geq 3$, $a_{1}, a_{2}, \ldots , a_{n}$ be distinct nonzero square-free integers, none of which is $1$. Then the polynomial $f(x) = (x^{2} - a_{1}) (x^{2} - a_{2}) \cdots (x^{2} - a_{n})$ has roots modulo every positive integer if and only if the following conditions are satisfied:

\begin{enumerate}
    
    \item There exists $T \subseteq \{1, 2, \ldots , n\}$ of odd cardinality such that:
    
    \begin{enumerate}
    \item the product $\prod_{j \in T} a_{j}$ is a perfect square;

    \item for every $j \in T$ and for every odd prime $p$ dividing $a_{j}$, there exists $i \in \{1, \ldots , n\}$, $i \neq j$, such that $\big(\frac{a_{i}}{p}\big) = +1 $ . 
    
    \end{enumerate}
    
    \item One of the $a_{i}$ is of the form $8m + 1$ for some $m \in\mathbb{Z}$ and $m \neq 0$.
    
\end{enumerate}
\end{theorem}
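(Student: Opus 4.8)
The plan is to use the reduction noted above: by the Chinese remainder theorem, $f$ has a root modulo every positive integer if and only if $f(x)\equiv 0 \pmod{p^{b}}$ is solvable for every prime $p$ and every $b\geq 1$, and I would analyze each prime separately. First I would isolate the purely local facts. For an odd prime $p$ with $p\nmid a$ and $a$ square-free, $x^{2}\equiv a \pmod{p^{b}}$ is solvable for all $b$ exactly when $\big(\frac{a}{p}\big)=+1$, by Hensel lifting of a simple root of $x^{2}-a$ modulo $p$. If instead $p\mid a$ (so $p\|a$ since $a$ is square-free), then $v_{p}(x^{2}-a)\le 1$ for every $x$, because $v_{p}(x^{2})$ is even while $v_{p}(a)=1$; hence the total contribution of all such factors to $v_{p}(f(x))$ is bounded by $n$. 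Finally, for $p=2$, $x^{2}\equiv a\pmod{2^{b}}$ is solvable for all $b$ iff $a\equiv 1\pmod 8$, and if $a\not\equiv 1\pmod 8$ (including the even case) then $v_{2}(x^{2}-a)$ is bounded by a small absolute constant, which one checks by working modulo $16$. These facts let me reformulate the odd-prime requirement cleanly as the single condition
\emph{(\textasteriskcentered) for every odd prime $p$ there exists $i$ with $\big(\frac{a_{i}}{p}\big)=+1$}, since any factor with $p\mid a_{i}$ contributes at most a bounded power of $p$, so arbitrarily high powers can only come from a factor $x^{2}-a_{i}$ with $p\nmid a_{i}$ that lifts by Hensel.

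For sufficiency I would show that conditions 1 and 2 imply both (\textasteriskcentered) and the $p=2$ solvability. Condition 2 provides an $a_{i}\equiv 1\pmod 8$, handling $p=2$. For an odd prime $p$, if $p\nmid\prod_{j\in T}a_{j}$ then the $\big(\frac{a_{j}}{p}\big)$, $j\in T$, are all $\pm 1$ and multiply to $\big(\frac{\prod_{j\in T}a_{j}}{p}\big)=+1$ by 1(a); since $|T|$ is odd they cannot all equal $-1$, so some $\big(\frac{a_{j}}{p}\big)=+1$. If instead $p\mid a_{j}$ for some $j\in T$, then condition 1(b) directly supplies an $i\neq j$ with $\big(\frac{a_{i}}{p}\big)=+1$. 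Thus (\textasteriskcentered) holds in all cases, and Hensel lifting together with the Chinese remainder theorem assembles a root of $f$ modulo every $m$.

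For necessity, assume $f$ has a root modulo every positive integer. The $p=2$ analysis forces some $a_{i}\equiv 1\pmod 8$: otherwise $v_{2}(f(x))$ would be uniformly bounded and $f$ would have no root modulo a high power of $2$, giving condition 2. The local analysis at odd primes forces (\textasteriskcentered) exactly as in the reformulation. Granting the set $T$ of condition 1(a), condition 1(b) is then automatic: for $j\in T$ and an odd prime $p\mid a_{j}$, property (\textasteriskcentered) yields $i$ with $\big(\frac{a_{i}}{p}\big)=+1$, and necessarily $i\neq j$ because $p\mid a_{j}$. So everything reduces to producing, from (\textasteriskcentered), a subset $T$ of odd cardinality with $\prod_{j\in T}a_{j}$ a perfect square.

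I expect this last step to be the main obstacle, and I would attack it with $\mathbb{F}_{2}$-linear algebra and quadratic reciprocity. Encode each square-free $a_{j}$ by its square-class vector $v_{j}$ over $\mathbb{F}_{2}$, indexed by $-1$, the prime $2$, and the odd primes dividing $\prod_{j}a_{j}$; then $\prod_{j\in T}a_{j}$ is a perfect square iff $\sum_{j\in T}v_{j}=0$. Prepending a constant coordinate, set $w_{j}=(1,v_{j})$; a subset $T$ of odd size with square product exists iff $(1,\mathbf{0})$ lies in the $\mathbb{F}_{2}$-span of the $w_{j}$. If it did not, a dual functional would give a subset $U$ of the index set with $|U\cap\operatorname{supp}(v_{j})|$ odd for every $j$. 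Using quadratic reciprocity I would translate the prescription ``$\big(\frac{s}{p}\big)=-1$ iff $s\in U$'' (for $s$ ranging over $-1$, $2$, and the relevant odd primes) into congruence conditions on $p$ modulo $8$ and modulo each odd prime in the index set; by the Chinese remainder theorem these are consistent, and by Dirichlet's theorem there is a prime $p$, coprime to every $a_{i}$, in the resulting class. For such $p$ one gets $\big(\frac{a_{i}}{p}\big)=(-1)^{|U\cap\operatorname{supp}(v_{i})|}=-1$ for all $i$, contradicting (\textasteriskcentered). Hence $(1,\mathbf{0})$ is in the span and the desired $T$ exists. The delicate points are the bookkeeping of the $-1$ and $2$ coordinates through reciprocity (handled by the freedom in $p\bmod 8$) and the appeal to Dirichlet's theorem to realize the congruence class by an actual prime.
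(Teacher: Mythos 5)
Your proposal is correct, and its skeleton matches the paper's: reduce to prime powers by the Chinese remainder theorem, use Hensel lifting when $\big(\frac{a_i}{p}\big)=+1$, bound the $p$-adic valuation of $x^{2}-a_{i}$ when $a_{i}$ is square-free and not a square modulo $p$ (your estimate $v_{p}(x^{2}-a_{i})\leq 1$ for $p\mid a_{i}$ is exactly the paper's Lemmas 2 and 3 rolled into one valuation count), and handle $p=2$ via the $8m+1$ criterion. The one genuine difference is that the paper treats the equivalence between condition 1(a) and the statement ``for every prime $p\nmid\prod_{i}a_{i}$ some $a_{i}$ is a quadratic residue modulo $p$'' as a black box (Lemma 1, cited to \cite{FR}), whereas you prove it: the $\mathbb{F}_{2}$ square-class encoding, the dual-functional subset $U$ with $|U\cap\operatorname{supp}(v_{j})|$ odd for all $j$, and the construction via reciprocity, the Chinese remainder theorem, and Dirichlet's theorem of a prime $p$ with $\big(\frac{a_i}{p}\big)=-1$ for every $i$ is precisely the standard proof of that lemma, and your outline of it is sound (all four sign patterns for $\big(\frac{-1}{p}\big)$ and $\big(\frac{2}{p}\big)$ are realizable by the class of $p$ modulo $8$, and the remaining prescriptions are congruences modulo distinct odd primes, so the combined class is nonempty and coprime to the modulus). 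What the paper's route buys is an argument accessible without Dirichlet's theorem, which is its stated aim; what your route buys is a fully self-contained proof plus the clean intermediate reformulation $(\ast)$, which in particular makes transparent why condition 1(b) is automatic in the necessity direction once the set $T$ has been produced.
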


We note that the polynomial $f(x) = \prod_{i=1}^{n} (x^{2} - a_{i})$ in Theorem $1$ cannot have a rational root, since none of the $a_{i}$ is a perfect square.

\section{Results on Quadratic Residues.}
In this section, we collect some results on quadratic reciprocity that will be used to prove Theorem $1$. 
\begin{enumerate}

    \item Let $p$ be an odd prime, $e \geq 1$, and $a \in\mathbb{Z}$ such that $p \nmid a$. Then $a$ is a square modulo $p^{e}$ if and only if $\big(\frac{a}{p}\big) = +1.$\\
    Since $p \nmid a$, the proof of this statement is an immediate consequence of Hensel's Lemma applied to the polynomial $q(x) = (x^{2} - a)$ (See \cite[p. 135]{Jones} for a proof).  
    
    \item \begin{enumerate}
    
        \item Let $a \in \mathbb{Z}$ be an odd number. Then $a$ is a square modulo $2^{i}$ for every $i \geq 1$ if and only if $a = 8m + 1$ for some $m \in\mathbb{Z}$(See \cite[p. 136]{Jones}) .

       \item Let $a$ be a square-free integer not equal to $1$. Then $a$ is a square modulo $2^{i}$ for every $i \geq 1$ if and only  $a = 8m + 1$ for some $m \in\mathbb{Z}$ and $m \neq 0$ (See \cite[Lemma 2.5]{HS}).
       
       \begin{proof}
       If $a$ is odd then the result follows from 2(a). On the other hand, if $a$ is even, then $a \equiv 2\hspace{1mm} (\text{mod } 4)$ because it is square-free. Since $2$ is not a square modulo $2^{2}$, $a$ cannot be a square modulo $2^{2}$. 
       \end{proof}
    
    \end{enumerate}

\end{enumerate}

\section{Some Useful Lemmas.}

We will also use the following lemma, a proof of which appears in \cite{FR}. 

\begin{lemma}
Let $a_{1}, a_{2}, ... , a_{n}$ be finitely many nonzero integers. Then the following conditions are equivalent:

\begin{enumerate}
    \item For each prime $p$ that does not divide $\prod_{i=1}^{n} a_{i}$ , at least one of the $a_{i}$ is a square modulo $p$. 
    
    \item There exists $T \subset \{1, 2, \cdots , n\}$ of odd cardinality such that $\prod_{j\in T} a_{j} $ is a perfect square. 
\end{enumerate}
\end{lemma}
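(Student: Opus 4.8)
The plan is to prove the two implications separately, with the forward direction (2) $\Rightarrow$ (1) being routine and the reverse direction (1) $\Rightarrow$ (2) carrying all the weight. For (2) $\Rightarrow$ (1), suppose $T$ has odd cardinality and $\prod_{j \in T} a_j$ is a perfect square, and let $p$ be a prime with $p \nmid \prod_i a_i$. Then every $a_i$ is a unit modulo $p$, so $\left(\frac{a_i}{p}\right) = \pm 1$, and multiplicativity of the Legendre symbol gives $\prod_{j \in T}\left(\frac{a_j}{p}\right) = \left(\frac{\prod_{j\in T} a_j}{p}\right) = +1$. If no $a_i$ were a square modulo $p$, every factor on the left would equal $-1$ and the product would be $(-1)^{|T|} = -1$ because $|T|$ is odd, a contradiction; hence some $a_i$ is a square modulo $p$.

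For (1) $\Rightarrow$ (2) I would argue the contrapositive and reformulate everything over $\mathbb{F}_2$. Fix the primes $q_1, \ldots, q_r$ dividing $\prod_i a_i$ and encode each $a_i$ by the vector $v_i = (\sigma_i, \epsilon_{i,1}, \ldots, \epsilon_{i,r}) \in \mathbb{F}_2^{r+1}$, where $\sigma_i \in \{0,1\}$ records the sign of $a_i$ and $\epsilon_{i,k}$ is the parity of the exponent of $q_k$ in $a_i$. Then $\prod_{j \in T} a_j$ is a perfect square precisely when $\sum_{j \in T} v_j = 0$. Prepending a $1$, set $w_i = (1, v_i) \in \mathbb{F}_2^{r+2}$, so that $\sum_{j \in T} w_j = (\,|T| \bmod 2,\ \sum_{j\in T} v_j\,)$. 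Condition (2) is then exactly the statement that $e_0 := (1,0,\ldots,0)$ lies in the $\mathbb{F}_2$-span of $w_1, \ldots, w_n$, since a subset sum over $\mathbb{F}_2$ is just a linear combination. Its negation, by linear-algebra duality, produces a functional, i.e.\ a vector $c = (c_{\mathrm{sgn}}, c_1, \ldots, c_r) \in \mathbb{F}_2^{r+1}$, satisfying $c_{\mathrm{sgn}}\,\sigma_i + \sum_k c_k\,\epsilon_{i,k} = 1$ for every $i$.

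The crux is then to realize this functional by a prime. I would seek a prime $p \nmid \prod_i a_i$ whose Legendre symbols match $c$, namely $\left(\frac{-1}{p}\right) = (-1)^{c_{\mathrm{sgn}}}$ and $\left(\frac{q_k}{p}\right) = (-1)^{c_k}$ for each $k$. For such a $p$, multiplicativity gives $\left(\frac{a_i}{p}\right) = (-1)^{c_{\mathrm{sgn}}\sigma_i + \sum_k c_k \epsilon_{i,k}} = -1$ for all $i$, so no $a_i$ is a square modulo $p$ and (1) fails, completing the contrapositive. To build $p$, I translate each prescription into a congruence: $\left(\frac{-1}{p}\right)$ and, if $2$ is among the $q_k$, $\left(\frac{2}{p}\right)$ are determined by $p \bmod 8$, while for each odd $q_k$ quadratic reciprocity converts the desired value of $\left(\frac{q_k}{p}\right)$, using the residue $p \bmod 4$ already fixed, into a prescribed value of $\left(\frac{p}{q_k}\right)$, i.e.\ a congruence on $p \bmod q_k$. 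By the Chinese remainder theorem these independent conditions modulo $8, q_1, \ldots, q_r$ combine into one residue class modulo $8\prod_k q_k$, and Dirichlet's theorem on primes in arithmetic progressions supplies a prime $p$ in it larger than every $q_k$.

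The main obstacle is this realization step: making the passage from an abstract $\mathbb{F}_2$-functional to \emph{simultaneously prescribable} Legendre symbols fully rigorous. One must verify that the conditions are jointly consistent, treating the prime $2$ and the factor $-1$ through a single residue class modulo $8$ and each odd $q_k$ through reciprocity without a circular dependence on $p \bmod 4$, and then invoke Dirichlet's theorem, which is the one genuinely nontrivial external input. Everything else, namely the multiplicativity bookkeeping and the duality argument, is routine linear algebra over $\mathbb{F}_2$.
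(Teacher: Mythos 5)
Your argument is correct. Note, however, that the paper itself gives no proof of this lemma: it is quoted as known and attributed to the reference [FR] (Filaseta--Richman), so there is no internal proof to compare against. Your write-up is essentially the standard argument that [FR] formalizes: the easy direction via multiplicativity of the Legendre symbol and the parity of $|T|$, and the hard direction via the $\mathbb{F}_2$-encoding of signs and exponent parities, duality to extract a functional killed by no $a_i$, and realization of that functional by an actual prime through quadratic reciprocity, the Chinese remainder theorem, and Dirichlet's theorem. The duality step is right (condition (2) is exactly $e_0\in\operatorname{span}\{w_i\}$, since subset sums over $\mathbb{F}_2$ are linear combinations), and the realization step goes through because the four residue classes $1,3,5,7 \pmod 8$ realize all four sign patterns of $\bigl(\frac{-1}{p}\bigr)$ and $\bigl(\frac{2}{p}\bigr)$, while for each odd $q_k$ both quadratic residues and nonresidues exist, so every prescription is consistent and the resulting class is coprime to the modulus. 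Two trivia you should still record: in the direction (2) $\Rightarrow$ (1) the prime $p=2$ must be disposed of separately (every integer is a square modulo $2$, so the Legendre-symbol computation is only needed for odd $p$), and the prime supplied by Dirichlet must be taken larger than all the $q_k$ so that it does not divide $\prod_i a_i$, which you do note. The trade-off versus the paper is clear: your route makes the lemma self-contained but imports Dirichlet's theorem, which is heavier machinery than anything else in this note, whereas the paper keeps its exposition at the level of elementary congruences by outsourcing precisely this lemma.
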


\begin{lemma}
Let $p$ be an odd prime and $a \in\mathbb{Z}$ be a square-free integer. If $\big(\frac{a}{p}\big) \neq +1$, then $a$ cannot be a square modulo $p^{2}$. 
\end{lemma}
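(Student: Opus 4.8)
The plan is to split according to the two possible values of the Legendre symbol that are consistent with $\big(\frac{a}{p}\big) \neq +1$, namely $\big(\frac{a}{p}\big) = -1$ and $\big(\frac{a}{p}\big) = 0$, and to show in each case that the congruence $x^{2} \equiv a \ (\text{mod } p^{2})$ has no solution. The two cases behave quite differently, and only the second one actually uses the square-free hypothesis.

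First I would dispose of the case $\big(\frac{a}{p}\big) = -1$. Here $p \nmid a$ and $a$ is a quadratic non-residue modulo $p$. If there were an integer $x$ with $x^{2} \equiv a \ (\text{mod } p^{2})$, then reducing this congruence modulo $p$ would give $x^{2} \equiv a \ (\text{mod } p)$, contradicting that $a$ is not a square modulo $p$. Hence $a$ is not a square modulo $p^{2}$, and no hypothesis beyond the definition of the Legendre symbol is needed.

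The slightly more delicate case is $\big(\frac{a}{p}\big) = 0$, i.e. $p \mid a$, and this is exactly where square-freeness enters. Since $a$ is square-free and $p \mid a$, we have $p \mid\mid a$, so $p^{2} \nmid a$. Suppose toward a contradiction that $x^{2} \equiv a \ (\text{mod } p^{2})$ for some integer $x$. Then $p \mid x^{2}$, so $p \mid x$; writing $x = py$ gives $x^{2} = p^{2} y^{2} \equiv 0 \ (\text{mod } p^{2})$, whence $a \equiv 0 \ (\text{mod } p^{2})$, i.e. $p^{2} \mid a$. This contradicts $p^{2} \nmid a$, so no such $x$ exists.

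The only real point requiring care, rather than a genuine obstacle, is making sure the square-free hypothesis is invoked correctly in the second case: it is precisely what rules out $p^{2} \mid a$ and thereby forces the contradiction. Without it the statement is false (for instance $a = p^{2}$ has $\big(\frac{a}{p}\big) = 0$ yet is a square modulo $p^{2}$), so the hypothesis cannot be dropped. Combining the two cases yields that $\big(\frac{a}{p}\big) \neq +1$ implies $a$ is not a square modulo $p^{2}$, completing the proof.
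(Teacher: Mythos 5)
Your proof is correct and follows essentially the same case split as the paper ($p \nmid a$ versus $p \mid a$), with the $p \mid a$ case handled by the identical contradiction argument forcing $p^{2} \mid a$. The only difference is cosmetic: in the $p \nmid a$ case the paper cites its Hensel-based Result 1, whereas you use only the trivial direction (a square modulo $p^{2}$ reduces to a square modulo $p$), which is if anything slightly more self-contained.
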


\begin{proof}
If $p \nmid a$ then $\big(\frac{a}{p}\big) \neq +1$, along with result $1$ on quadratic reciprocity, gives that $a$ is not a square modulo $p^{2}$. 

If $p \mid a$ then $p \mid\mid a$ because $a$ is square-free. Assume that $a$ is a square modulo $p^{2}$, i.e., $x^{2} \equiv a \hspace{1mm} (\text{mod}\hspace{1mm} p^{2})$ for some $x \in\mathbb{Z}$. Then we have $p^{2} \mid (x^{2} - a)$, i.e., $p \mid (x^{2} - a)$. 

Since $p \mid a$ and $p \mid (x^{2} - a)$, we have that $p \mid x^{2}$ implying $p^{2} \mid x^{2}$. However,\\ $p^{2} \mid (x^{2} - a)$ and $p^{2} \mid x^{2}$ gives that $p^{2} \mid a$, contradicting that $a$ is square-free. Therefore, $a$ cannot be a square modulo $p^{2}$.  
\end{proof}

\begin{lemma}
Let $p$ be a prime number, $k,n \in\mathbb{N}$, and $f(x) =  \prod_{i=1}^{n} (x^{2} - a_{i}) \in\mathbb{Z}[x]$. If $(x^{2} - a_{i}) \equiv 0 \hspace{1mm} (\text{mod}\hspace{1mm} p^{k})$ is not solvable for any $1 \leq i \leq n$, then $f(x) \equiv 0 \hspace{1mm} (\text{mod}\hspace{1mm} p^{kn})$ is not solvable.  
\end{lemma}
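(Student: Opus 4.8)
The plan is to argue by contradiction using the $p$-adic valuation $v_p$. Suppose, contrary to the conclusion, that there is an integer $x_0$ with $f(x_0) \equiv 0 \hspace{1mm}(\text{mod}\hspace{1mm} p^{kn})$. First I would record what the hypothesis says concretely: the unsolvability of $x^2 - a_i \equiv 0 \hspace{1mm}(\text{mod}\hspace{1mm} p^{k})$ means precisely that $p^{k} \nmid (x^2 - a_i)$ for \emph{every} integer $x$; in particular $p^{k} \nmid (x_0^2 - a_i)$ for each $i$. The point of this reformulation is that it lets me apply the hypothesis to the specific witness $x_0$.

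Next I would note that $f(x_0) \neq 0$. Indeed, if $f(x_0) = 0$ then $x_0^2 = a_j$ for some $j$, and reducing modulo $p^{k}$ would exhibit $a_j$ as a square modulo $p^{k}$, contradicting the hypothesis. Hence every factor $x_0^2 - a_i$ is a nonzero integer, so $v_p(x_0^2 - a_i)$ is a well-defined nonnegative integer. Combining this with the divisibility statement above, $p^{k} \nmid (x_0^2 - a_i)$ yields the per-factor bound $v_p(x_0^2 - a_i) \leq k - 1$ for every $i$.

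Finally I would combine these bounds using additivity of the valuation over products:
\[
v_p(f(x_0)) = \sum_{i=1}^{n} v_p(x_0^2 - a_i) \leq n(k-1) = kn - n < kn,
\]
where the strict inequality uses $n \geq 1$. This contradicts $p^{kn} \mid f(x_0)$, which would force $v_p(f(x_0)) \geq kn$. Therefore no such $x_0$ exists, and $f(x) \equiv 0 \hspace{1mm}(\text{mod}\hspace{1mm} p^{kn})$ is unsolvable.

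The argument is short, and there is no single hard step; the only genuine care needed is bookkeeping. One must translate ``unsolvable mod $p^{k}$'' into the uniform statement $p^{k} \nmid (x^2 - a_i)$ for all $x$ so that it applies to $x_0$, and one must separately dispose of the degenerate case $f(x_0) = 0$ so that the valuation is finite and the additivity $v_p(\prod_i (x_0^2 - a_i)) = \sum_i v_p(x_0^2 - a_i)$ is legitimate. The exponent $kn$ is exactly what makes the counting tight: each of the $n$ factors can absorb at most $k-1$ powers of $p$, for a total of at most $n(k-1) < kn$.
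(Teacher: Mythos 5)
Your argument is correct and is essentially the paper's own proof: the paper derives a contradiction by noting that $p^{kn}\mid\prod_i(x_0^2-a_i)$ forces $p^k\mid(x_0^2-a_j)$ for some $j$, which is exactly the pigeonhole on $p$-adic valuations that you make explicit via the bound $v_p(x_0^2-a_i)\le k-1$ for each factor. Your extra care with the degenerate case $f(x_0)=0$ is harmless but not needed for a different reason than you give: if some factor vanishes then $p^k$ divides it trivially, already contradicting unsolvability.
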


\begin{proof}
For the sake of contradiction, assume that $f(x) \equiv 0 \hspace{1mm} (\text{mod}\hspace{1mm} p^{kn})$ is solvable for some $x \in\mathbb{Z}$, i.e., $p^{kn} \mid (x^{2} - a_{1}) \cdots (x^{2} - a_{n})$. Then we must have $p^{k} \mid (x^{2} - a_{j})$, for some $j \in \{1, 2, \ldots , n\}$. 

However $p^{k} \mid (x^{2} - a_{j})$ implies that $(x^{2} - a_{j}) \equiv 0 \hspace{1mm} (\text{mod}\hspace{1mm} p^{k})$ is solvable, a contradiction to the fact that $(x^{2} - a_{i}) \equiv 0 \hspace{1mm} (\text{mod}\hspace{1mm} p^{k})$ is not solvable for any $i$. Therefore, we have the result. 
\end{proof}

\section{Proof of Theorem 1.}

\subsection{Proof of Necessity:} The necessity of the condition $1(a)$ immediately follows from Lemma $1$. Now we will show that if condition $1(b)$ or condition $2$ fails then $f(x)$ fails to have root modulo some integer.\vspace{1mm}
   
Assume that the condition $1(b)$ in Theorem $1$ fails: if $T$ is a subset of $\{1, 2, \ldots , n\}$ of odd cardinality for which $\prod_{j \in T} a_{j}$ is a perfect square, then there exists $j \in T$ and an odd prime $p \mid a_{j}$ such that $\big(\frac{a_{i}}{p}\big) \neq +1 $ for any $i \neq j$. Since each $a_{i}$ is square-free, Lemma $2$ implies that for any $i \neq j$, $(x^{2} - a_{i}) \equiv 0 \hspace{1mm} (\text{mod}\hspace{1mm} p^{2})$ is not solvable. Similarly, $(x^{2} - a_{j}) \equiv 0 \hspace{1mm} (\text{mod}\hspace{1mm} p^{2})$ is not solvable because $\big(\frac{a_{j}}{p}\big) = 0 \neq +1 $. \vspace{1mm}

Since $(x^{2} - a_{i}) \equiv 0 \hspace{1mm} (\text{mod}\hspace{1mm} p^{2})$ is not solvable for any $1 \leq i \leq n$, Lemma $3$ for $k = 2$ implies that the equation $f(x) \equiv 0 \hspace{1mm} (\text{mod}\hspace{1mm} p^{2n})$ is not solvable.\vspace{1mm}
   
If the condition $2$ in the statement of Theorem 1 fails, then none of the $a_{i}$ is equal to $(8m + 1)$ for any nonzero $m \in\mathbb{Z}$. Since each of the $a_{i}$ is square-free, result $2(b)$ on quadratic reciprocity implies for any $i \in \{1, 2, \ldots , n\}$, $(x^{2} - a_{i}) \equiv 0 \hspace{1mm}(\text{mod}\hspace{1mm}8)$ is not solvable. Then applying Lemma $3$ for $p = 2$ and $k = 3$ gives that the congruence $f(x) \equiv 0 \hspace{1mm} (\text{mod}\hspace{1mm} 8^{n})$ is not solvable.

\subsection{Proof of Sufficiency:}As a consequence of the Chinese remainder theorem, it suffices to show that $f(x) \equiv 0 \hspace{1mm}(\text{mod}\hspace{1mm} p^{b})$ is solvable for every prime $p$ and every integer $b \geq 1$.\vspace{1mm}
   
If $p$ is an odd prime that does not divide $\prod_{j \in T} a_{j}$, an application of Lemma $1$ for $\{a_{j}\}_{j \in T}$ implies that for some $j_{0} \in T$, $a_{_{j_{0}}}$ is a square modulo $p$. Since $p \nmid \prod_{j \in T} a_{j}$, result $1$ on quadratic reciprocity implies that  $a_{_{j_{0}}}$ is a square modulo every positive power of $p$. Therefore, $f(x) \equiv 0 \hspace{1mm}(\text{mod}\hspace{1mm} p^{b})$ is solvable for every integer $b \geq 1$.\vspace{1mm}

If $p$ is an odd prime dividing $\prod_{j \in T} a_{j}$, then $p \mid a_{j}$ for some $j \in T$. Condition $1(b)$ of Theorem $1$ ensures that there exists $i \neq j$ such that $\big(\frac{a_{i}}{p}\big) = +1$. Since $\big(\frac{a_{i}}{p}\big) = +1$, we have that $p \nmid a_{i}$. Therefore, $a_{i}$ is a square modulo all powers of $p$, using result $1$ on quadratic reciprocity. Hence, $f(x) \equiv 0 \hspace{1mm}(\text{mod}\hspace{1mm} p^{b})$ is solvable for every integer $b \geq 1$.\vspace{1mm}
   
If $p = 2$ then condition $2$ of the theorem ensures that for some $k \in \{1, 2, \ldots , n\}$ and for some nonzero integer $m$, $a_{k} = 8m + 1$. Then $(x^{2} - a_{k}) \equiv 0 \hspace{1mm} (\text{mod}\hspace{1mm} 2^{b})$ is solvable for every $b \geq 1$, by result $2(b)$ on quadratic reciprocity. Therefore, $f(x) \equiv 0 \hspace{1mm}(\text{mod}\hspace{1mm} 2^{b})$ is solvable for every integer $b \geq 1$. 

\section{Corollaries and Examples.}
The first corollary gives a necessary and sufficient condition for the polynomial $(x^{2} - p) (x^{2} - q) (x^{2} - pq)$ to have roots modulo every positive integer. Here $p$ and $q$ are distinct odd primes.

\begin{corollary}
Let $p$ and $q$ be distinct odd primes. The polynomial\begin{equation*} f(x) = (x^{2} - p) (x^{2} - q) (x^{2} - pq)\end{equation*} has roots modulo every positive integer if and only if $\big( \frac{p}{q} \big) = \big( \frac{q}{p} \big) = +1$.
\end{corollary}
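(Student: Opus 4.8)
The plan is to derive the corollary directly from Theorem 1 applied with $n = 3$ and $a_1 = p$, $a_2 = q$, $a_3 = pq$. Since $p$ and $q$ are distinct odd primes, the three integers $p, q, pq$ are distinct, nonzero, square-free, and none equals $1$, so Theorem 1 applies verbatim. The corollary therefore amounts to showing that conditions $1$ and $2$ of Theorem 1 hold for this choice of $a_i$ precisely when $\big(\frac{p}{q}\big) = \big(\frac{q}{p}\big) = +1$. I would prove both implications of the biconditional by translating each of Theorem 1's conditions into a statement about the Legendre symbols $\big(\frac{p}{q}\big)$ and $\big(\frac{q}{p}\big)$.

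The first task is to unwind condition $1$. The odd-cardinality subsets of $\{1,2,3\}$ are $\{1\}, \{2\}, \{3\}$, and $\{1,2,3\}$, with corresponding products $p$, $q$, $pq$, and $p \cdot q \cdot pq = (pq)^2$. Only the last is a perfect square, so the unique candidate for $T$ in condition $1(a)$ is $T = \{1,2,3\}$, and $1(a)$ holds automatically. I would then spell out $1(b)$ for $T = \{1,2,3\}$: the odd primes dividing the $a_j$ are exactly $p$ and $q$. For the prime $p$ (which divides $a_1 = p$ and $a_3 = pq$), condition $1(b)$ requires some $a_i$ with $i$ different from the relevant index and $\big(\frac{a_i}{p}\big) = +1$; since $\big(\frac{p}{p}\big) = \big(\frac{pq}{p}\big) = 0$, the only admissible value is $a_2 = q$, so $1(b)$ at $p$ is equivalent to $\big(\frac{q}{p}\big) = +1$. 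Symmetrically, $1(b)$ at the prime $q$ is equivalent to $\big(\frac{p}{q}\big) = +1$. Hence condition $1$ is equivalent to the pair $\big(\frac{q}{p}\big) = \big(\frac{p}{q}\big) = +1$, exactly the condition in the corollary. This part is routine bookkeeping.

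The main obstacle is condition $2$, which demands that one of $p$, $q$, or $pq$ be congruent to $1$ modulo $8$. For the ``only if'' direction this is automatic, since Theorem 1 guarantees condition $2$ whenever $f$ has roots modulo every positive integer. The difficulty lies in the ``if'' direction: to invoke Theorem 1 I must derive condition $2$ from $\big(\frac{p}{q}\big) = \big(\frac{q}{p}\big) = +1$ alone. My plan for this step is a case analysis on the residues of $p$ and $q$ modulo $8$, combined with two inputs: quadratic reciprocity in the form $\big(\frac{p}{q}\big)\big(\frac{q}{p}\big) = (-1)^{\frac{p-1}{2}\frac{q-1}{2}}$, and the supplementary law $\big(\frac{2}{r}\big) = +1 \iff r \equiv \pm 1 \pmod 8$. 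Reciprocity forces the product of the two symbols to be $+1$, hence $\frac{p-1}{2}\frac{q-1}{2}$ is even, so at least one of $p, q$ is $\equiv 1 \pmod 4$; noting also that $pq \equiv 1 \pmod 8$ exactly when $p \equiv q \pmod 8$ (each odd residue being self-inverse modulo $8$), I would try to exclude, case by case, every configuration in which none of $p, q, pq$ is $1$ modulo $8$. I expect this to be the delicate part of the proof: passing from ``$p \equiv 1 \pmod 4$ or $q \equiv 1 \pmod 4$'' to the stronger ``$p \equiv 1$, or $q \equiv 1$, or $p \equiv q$ modulo $8$'' is precisely the step that must be handled with care, since the information modulo $4$ supplied by reciprocity is a priori weaker than what condition $2$ requires, and closing this gap is the crux of establishing the corollary as stated.
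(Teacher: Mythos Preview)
Your overall approach coincides with the paper's: apply Theorem~1 with $a_1=p$, $a_2=q$, $a_3=pq$, observe that $a_1a_2a_3=(pq)^2$ so $T=\{1,2,3\}$ is the only admissible choice in condition~1(a), and translate condition~1(b) into $\bigl(\tfrac{p}{q}\bigr)=\bigl(\tfrac{q}{p}\bigr)=+1$. That part is correct and matches the paper exactly.

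The step you flag as delicate, however, is not merely delicate: it fails, and with it the corollary as stated. Take $p=5$ and $q=11$. Then $\bigl(\tfrac{11}{5}\bigr)=\bigl(\tfrac{1}{5}\bigr)=+1$ and, by reciprocity (or directly, since $4^2\equiv 5\pmod{11}$), $\bigl(\tfrac{5}{11}\bigr)=+1$. Yet $5\equiv 5$, $11\equiv 3$, and $55\equiv 7\pmod 8$, so none of $p$, $q$, $pq$ is $\equiv 1\pmod 8$ and condition~2 of Theorem~1 fails. Concretely, for odd $x$ one has $v_2(x^2-5)=2$ and $v_2(x^2-11)=v_2(x^2-55)=1$, while for even $x$ all three factors are odd; hence $v_2\bigl(f(x)\bigr)\le 4$ for every integer $x$, and $(x^2-5)(x^2-11)(x^2-55)$ has no root modulo $32$. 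So your planned case analysis cannot succeed.

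The paper's own proof glosses over precisely this point: it asserts without argument that ``one of $p$ or $q$ or $pq$ has to be of the form $(8m+1)$'', which is false both unconditionally and under the hypothesis $\bigl(\tfrac{p}{q}\bigr)=\bigl(\tfrac{q}{p}\bigr)=+1$. Your instinct that extra input is needed was right. The statement becomes correct under the classical hypothesis $p\equiv q\equiv 1\pmod 4$ recalled in the introduction, since then $p,q\in\{1,5\}\pmod 8$, and either one of them is already $1\pmod 8$ or both are $5\pmod 8$, forcing $pq\equiv 1\pmod 8$.
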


\begin{proof}
Note that $p \times q \times pq = (pq)^{2}$ and that one of $p$ or $q$ or $pq$ has to be of the form $(8m + 1)$ for $0 \neq m \in \mathbb{Z}$. So the result follows by applying Theorem $1$ to $a_{1} = p$, $a_{2} = q$, and $a_{3} = a_{1}a_{2} = pq$. 
\end{proof}

When $p$ and $q$ are distinct primes such that $p \equiv q \equiv 1 \hspace{1mm}(\text{mod}\hspace{1mm} 4)$, $\big(\frac{p}{q}\big) = +1 $ if and only if $\big(\frac{q}{p}\big) = +1$. Hence the polynomial $h(x) = (x^{2} - p) (x^{2} - q) (x^{2} - pq)$ has roots modulo every integer when $p \equiv q \equiv 1 \hspace{1mm}(\text{mod}\hspace{1mm} 4)$ and $\big(\frac{p}{q}\big) = +1$, as mentioned in the Introduction.

\begin{corollary}
Let $c$ and $d$ be two square-free integers not equal to $1$, $c_{1} = \frac{c}{\text{gcd}(c,d)}$, and $d_{1} = \frac{d}{\text{gcd}(c,d)}$. Then $f(x) = (x^{2} - c) (x^{2} - d) (x^{2} - c_{1}d_{1})$ has roots modulo every integer if and only if the following conditions are satisfied:
\begin{enumerate}
    \item For each odd prime $p$ dividing $c$, $\big(\frac{d}{p}\big) = +1  $ or $\big(\frac{c_{1}d_{1}}{p}\big) = +1$ and for each odd prime $p$ dividing $d$, $\big(\frac{c}{p}\big) = +1$ or $\big(\frac{c_{1}d_{1}}{p}\big) = +1$.
    
    \item At least one of $c$, $d$ and $c_{1}d_{1}$ is of the form $(8m + 1)$ for some nonzero integer $m$. 
\end{enumerate}
\end{corollary}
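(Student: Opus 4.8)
The plan is to deduce the corollary as a direct application of Theorem 1 to the three integers $a_{1} = c$, $a_{2} = d$, and $a_{3} = c_{1}d_{1}$, so that conditions 1 and 2 of the corollary become translations of conditions 1 and 2 of the theorem. Before invoking the theorem I would check that these three integers satisfy its hypotheses. Writing $g = \gcd(c,d) \geq 1$, so that $c = gc_{1}$ and $d = gd_{1}$, the square-freeness of $c$ and $d$ forces $g$, $c_{1}$, $d_{1}$ to be pairwise coprime and individually square-free; hence $a_{3} = c_{1}d_{1}$ is a product of coprime square-free integers and is itself square-free. A short divisibility argument then shows that, provided $c \neq d$, none of $c$, $d$, $c_{1}d_{1}$ equals $1$ and the three are pairwise distinct (for instance $c_{1}d_{1} = c$ would force $d = g^{2}$, impossible for a square-free $d \neq 1$). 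Thus Theorem 1 applies with $n = 3$.

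The key observation is that for $n = 3$ the subset $T$ appearing in condition 1 of the theorem is essentially forced. Its odd cardinality leaves only the three singletons and the full set $\{1,2,3\}$. A singleton $\{i\}$ would require $a_{i}$ to be a perfect square, which is impossible for a square-free integer different from $1$. On the other hand $a_{1}a_{2}a_{3} = c\,d\,c_{1}d_{1} = (g c_{1} d_{1})^{2}$ is a perfect square, so $T = \{1,2,3\}$ satisfies condition 1(a) automatically. Consequently condition 1 of Theorem 1 is equivalent to the single statement that $T = \{1,2,3\}$ satisfies condition 1(b).

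It then remains to verify that condition 1(b) for $T = \{1,2,3\}$ is exactly condition 1 of the corollary, which I would do by sorting the odd primes $p$ dividing $c\,d$ into three mutually exclusive types according to whether $p \mid g$, $p \mid c_{1}$, or $p \mid d_{1}$. For $p \mid g$ one has $p \mid c$, $p \mid d$, and $p \nmid c_{1}d_{1}$, so $\big(\tfrac{c}{p}\big) = \big(\tfrac{d}{p}\big) = 0$ and condition 1(b) for the indices $j$ with $p \mid a_{j}$ collapses to $\big(\tfrac{c_{1}d_{1}}{p}\big) = +1$; for $p \mid c_{1}$ one has $p \mid c$, $p \mid c_{1}d_{1}$, $p \nmid d$, forcing $\big(\tfrac{d}{p}\big) = +1$; and symmetrically $p \mid d_{1}$ forces $\big(\tfrac{c}{p}\big) = +1$. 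Matching each type against the two clauses of the corollary's condition 1 (the clause for primes dividing $c$ and the clause for primes dividing $d$, each a disjunction in which the term $\big(\tfrac{c_{1}d_{1}}{p}\big) = +1$ can hold only when $p \nmid c_{1}d_{1}$) reproduces exactly these requirements, giving the equivalence. Finally, condition 2 of the corollary is verbatim condition 2 of Theorem 1, so the two sets of conditions coincide and the corollary follows.

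The main obstacle I anticipate is the bookkeeping in this last step: ensuring that the disjunctions in the corollary's condition 1 collapse to the single required Legendre-symbol condition in each of the three prime types, and confirming that the primes dividing $c$ and the primes dividing $d$ jointly range over exactly the odd primes dividing $a_{1}a_{2}a_{3}$ with no double-counting issues (the primes of $g$ being the only ones shared between the two clauses, and reducing to the same condition in both). The preliminary verification that $c_{1}d_{1}$ is square-free, nonzero, different from $1$, and distinct from $c$ and $d$ is routine but must be recorded, since Theorem 1 requires its inputs to be distinct nonzero square-free integers none of which is $1$.
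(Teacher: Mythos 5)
Your proposal is correct and takes essentially the same route as the paper: apply Theorem 1 to $a_{1} = c$, $a_{2} = d$, $a_{3} = c_{1}d_{1}$ and observe that $c \cdot d \cdot c_{1}d_{1} = (\gcd(c,d)\,c_{1}d_{1})^{2}$ is a perfect square, so $T = \{1,2,3\}$ is the relevant subset. The paper's proof is a one-line version of this; your additional verifications (that $c_{1}d_{1}$ is square-free and the three integers are distinct, and the prime-by-prime case analysis matching condition 1(b) to the corollary's condition 1, including the clause for primes dividing $c_{1}d_{1}$) are details the paper leaves implicit.
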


Necessary and sufficient conditions for the polynomial in above corollary to have a root modulo every positive integer were obtained by Hyde and Spearman in \cite{HS}. In our Corollary $2$, there are fewer conditions to check compared to the result obtained in \cite{HS}.

\begin{proof}
Apply Theorem $1$ for three square-free integers $a_{1} = c$, $a_{2} = d$, and $a_{3} = c_{1} d_{1}$ and note that $a_{1} \times a_{2} \times a_{3} = c \times d \times c_{1}d_{1} = (rc_{1}d_{1})^{2}$, where $r =$ gcd $(c, d)$.
\end{proof}

It is instructive to see some examples of application of Theorem $1$. The polynomials listed below are obtained from different values of $n$ and square-free integers $a_{1}, a_{2}, \ldots , a_{n}$ in Theorem $1$. 

\begin{enumerate}
\item $(x^{2} - 13) (x^{2} - 17) (x^{2} - 221)$ has roots modulo every integer because

$13 \times 17 \times 221 = (221)^{2}$, $\big(\frac{13}{17}\big) = +1$, $\big(\frac{17}{13}\big) = +1$, and $17 = 8 (2) + 1$.\vspace{2mm}

\item $(x^{2} - 7) (x^{2} - 11) (x^{2} - 19) (x^{2} - 31) (x^{2} - 209)$ has roots modulo every integer because

$ 11 \times 19 \times 209 = (209)^{2}$,
$\big(\frac{31}{11}\big) = +1$, $\big(\frac{7}{19}\big) = +1$, and $209 = 8 (26) + 1$.\vspace{2mm}

\item $(x^{2} - 7) (x^{2} - 11) (x^{2} - 19) (x^{2} - 31) (x^{2} - 45353)$ has roots modulo every integer because

$7 \times 11 \times 19 \times 31 \times 45353 = (45353)^{2}$,
$\big(\frac{11}{7}\big) = +1$, $\big(\frac{31}{11}\big) = +1$, $\big(\frac{7}{19}\big) = +1$, $\big(\frac{19}{31}\big) = +1$, and $45353 = 8 (5669) + 1$.\vspace{2mm}

\end{enumerate}

\begin{acknowledgment}
This research was partly supported by the NSF, under grant DMS-1812028.
\end{acknowledgment}

\bibliographystyle{amsalpha}
\bibliography{Intersective_2}

\providecommand{\bysame}{\leavevmode\hbox to3em{\hrulefill}\thinspace}
\providecommand{\MR}{\relax\ifhmode\unskip\space\fi MR }
\providecommand{\MRhref}[2]{%
  \href{http://www.ams.org/mathscinet-getitem?mr=#1}{#2}
}
\providecommand{\href}[2]{#2}
\begin{thebibliography}{HLS14}

\bibitem[FR89]{FR}
A.~M. Filaseta and R.~D. Richman, \emph{{Sets which Contain a Quadratic Residue
  Modulo $p$ for Almost All $p$}}, Math. J. Okayama Univ. \textbf{39} (1989),
  1--8.

\bibitem[Gou20]{Gou}
F.~Q. Gouv\^ea, \emph{{p-adic Numbers, An Introduction.}}, Springer Nature,
  Cham, Switzerland, 2020.

\bibitem[HLS14]{HLS}
A.M. Hyde, D.P. Lee, and K.B. Spearman, \emph{{Polynomials (x$^3$-n)(x$^2$+3)
  Solvable Modulo Any Integer }}, Amer. Math. Monthly \textbf{121:4} (2014),
  355--358.

\bibitem[HS13]{HS}
A.M. Hyde and B.K. Spearman, \emph{{Products of Quadratic Polynomials with
  Roots Modulo Any Integer}}, Int. Math. Forum \textbf{8} (2013), 1225--1231.

\bibitem[JJ98]{Jones}
A.~G. Jones and J.~M. Jones, \emph{{Elementary Number Theory}},
  Springer-Verlag, London, 1998.

\end{thebibliography}

\end{document}